\theoremstyle{plain}
\newtheorem{thm}{Theorem}
\newtheorem*{theorem*}{Theorem}
\theoremstyle{definition}
\theoremstyle{remark}
\newcommand{\nc}{\newcommand}
\nc{\dmo}{\DeclareMathOperator}
\nc{\B}[1]{\mathbb{#1}}
\nc{\C}[1]{\mathcal{#1}}
\nc{\Sc}[1]{\mathscr{#1}}
\dmo{\sbld}{\preceq_s}
\dmo{\bld}{\preceq}
\dmo{\bldneq}{\prec}
\dmo{\sbldneq}{\prec_s}
\nc{\lb}{\llbracket}
\nc{\rb}{\rrbracket}
\dmo{\Id}{Id}
\renewcommand{\mod}[1]{{\ifmmode\text{\rm\ (mod~$#1$)}\else\discretionary{}{}{\hbox{ }}\rm(mod~$#1$)\fi}}
\begin{document}
\title[] {No  function is continuous only at points in a  countable dense subset}

\author[Silva]{Cesar E. Silva}
\address[Cesar E. Silva]{Department of Mathematics\\
     Williams College \\ Williamstown, MA 01267, USA}
\email{csilva@williams.edu}

\author[Wu]{Yuxin Wu}
\address[Yuxin Wu ]{ \\
     Williams College \\ Williamstown, MA 01267, USA}
\email{yw2@williams.edu}

\subjclass[2010]{Primary 26A15; Secondary 26-01, 54C30, } 
\keywords{Continuous functions, complete metric spaces}

\maketitle



\begin{abstract}  We give a short  proof, that can be used in an introductory real analysis course,  that if a  function that is defined on the set of real numbers 
  is continuous on  a  countable dense set, then it is continuous on  an uncountable set.  This is done for functions defined  on   complete metric spaces without isolated points,  and the argument only uses that Cauchy sequences  converge, and we prove the version related to Volterra's theorem.  We discuss how this theorem is a direct consequence of the Baire category theorem, and also discuss  Volterra's theorem and the history of this problem. We conclude with a simple example, for each complete metric space without isolated points and each set that is a countable union of closed subsets,  of a real-valued function that is discontinuous only on that set.
 	
 	 \end{abstract}


\section{Introduction.} A function defined on the set of real numbers that is continuous only at 0 is given by $f(x)=xD(x)$, where 
$D(x)$ is Dirichlet's function (i.e., the indicator function of the set of rational numbers).
From this one can construct examples of functions continuous at only finitely many points, or only  at all the  integer points.
It is also possible to construct a function  that is continuous only on the set of irrational numbers; a well-known  example is Thomae's function, also called the generalized Dirichlet function---and see also the example in Section~\ref{examples}.  The question arises whether there is a function defined on the real numbers that is continuous only on the set of rational numbers (i.e., continuous at each rational number and discontinuous at each irrational), and the answer has long been known to be no. Dunham discusses in \cite{Du} Volterra's 1881 theorem that implies this result. The standard proof now proceeds by arguing that the set of continuity points of a function must be a countable intersection of open sets, and that by Baire's category theorem the set of rational numbers $\mathbb Q$ is not a countable intersection of open sets. As Dunham \cite{Du}  points out, Volterra's theorem appeared before Baire's theorem, and in \cite{Du} he gives
a nice short argument of Volterra's theorem.

We prove that for complete metric spaces, without isolated points,  there are no  functions  that are continuous  only on a countable dense set. This, of course, also follows from Baire's theorem by the previous remarks. Our argument, though,  only uses the fact that Cauchy sequences must converge.  As in \cite{Du}, we are interested in techniques independent of Baire's theorem that can be used in an undergraduate real analysis class. The main idea is that for each sequence of points one can  construct a new
point, not in the sequence,  that is obtained in the intersection of a nested decreasing sequence of closed balls. This idea of creating a new point in a 
countable intersection of closed bounded intervals can already be seen in Cantor's 1874 proof of the uncountability of the real numbers (see, e.g., \cite{KS}), and Volterra's original proof does use a version of the Nested Intervals theorem as remarked in \cite{Du}. There are proofs already that use the Nested Intervals theorem to prove that there is no   function that is continuous only on the rational numbers, or on a countable dense set of real numbers, see e.g., Gauld \cite{G93}, Lee \cite{L}, and Saxe \cite{Sa}.  We   show that with a slight modification, whose ideas are already contained in Volterra's \cite{Vo} paper, our proof can be used to obtain Volterra's result for complete metric spaces, Theorem~\ref{T2}; in fact, we obtain a slightly stronger result than Volterra's as stated by Gauld \cite{G93}, which treats the case for functions defined on an interval (Gauld mentions that  his proof also works for compact metric spaces and our remark is that it works in the setting of complete metric spaces).

 \medskip

\section {Historical Remarks.}
We now make a few remarks regarding the history of this problem.  First we discuss the  theorems and then the examples.
The story of the theorems begins with Volterra's theorem of 1881. Let  $C(f)$ stand for the set of points where the  function $f$ is continuous,
and $D(f)$ for its set  of discontinuity points, or its complement.  Volterra proved that if $f$ and $g$ are real-valued functions defined on 
$\mathbb R$ and such that $C(f)$ and $C(g)$ are dense, then $C(f)\cap C(g)$ is nonempty (in fact, this set is dense as noted in \cite{G93}). From this it follows that there is no function 
$f$ such that $C(f)=\mathbb Q$, since in Volterra's theorem one can choose $g$ to be Thomae's function, for which $C(g)=\mathbb Q^c$. To see Volterra's theorem  as a
consequence of Baire's category theorem, which was published eight years later in 1899,  we recall some  facts about sets. If a set $A$ is a countable union of closed sets, i.e., $A=\bigcup_{n=1}^\infty F_n$ where each $F_n$ is closed (such a set is called an $\mathcal F_\sigma$ set), 
then its complement $A^c=\bigcap_{n=1}^\infty F_n^c$ is a countable intersection of open sets (a set of this form is called a $\mathcal G_\delta$ set), and vice versa. 
It has been known for some time  that $D(f)$ is an $\mathcal F_\sigma$ set. There is a proof of this  for functions of a real variable in 
 Lebesgue \cite[p. 235]{Leb}, and the reader may find a nice short proof in Oxtoby \cite[Theorem 7.1]{Ox80}. (We learned of the reference to Lebesgue in Renfro \cite{Ren}, where one can also find references to many other sources).  This  has been generalized: as Bolstein \cite{Bo73} points out, there is an outline of a proof for functions on topological spaces (which are more general than metric spaces) in Hewit-Stromberg \cite[Exercise 6.90] {HS65}.  It follows then 
  that  the set $C(f)=D(f)^c$ is a countable intersection of open sets. 
 
  Now, Baire's category theorem states that  
 in a complete metric space $X$, a countable intersection of dense open sets is dense, and equivalently, 
that $X$  is not a countable union of nowhere dense sets (a nowhere dense set is a set whose closure has empty interior); see e.g., \cite[3.3.2, 3.3.3]{Si08}.  It follows then that if $C(f)$ and $C(g)$ are dense, then their intersection is dense, so nonempty. 
 In fact, we obtain more: that if we have a countable family of functions $f_i$  so that each $C(f_i)$ is dense, then the intersection of the family is dense. 
Gauld and Piotrowski  \cite{GP93} defined strongly Volterra spaces,  which  were later called Volterra spaces, as those for which  $C(f)\cap C(g)$ is dense when $C(f)$ and $C(g)$ are dense. Clearly spaces that satisfy  Baire's theorem, such as complete metric spaces, are Volterra spaces, and the fact that the converse is not true was shown in \cite{GP96, GL00}---spaces for which Volterra's theorem holds have been studied in e.g.,  \cite{CaGa05, GL00}.
 Finally we note that Gauld gave a proof in  \cite{G93}, just using the Nested Intervals theorem in $\mathbb R$, that $C(f)\cap C(g)$ is in fact uncountable.
 The fact that this intersection is uncountable  also follows from Baire's  theorem since it were countable then $X$ would be a countable union of 
 nowhere dense sets. Gauld also mentions that his methods generalize to compact metric spaces. As in  
  \cite{G93}, we are interested in giving a self-contained proof of these results, and instead of using the Nested Intervals theorem, 
  we use Cauchy completeness.

 Now we consider the examples. A natural question  to ask now is  if a set that is a countable union of closed sets admits a function with that set as its discontinuity points; for the case of the rationals the answer is yes and given by  Thomae's function. For the general case on the real line this was shown in 1903   by W. H Young  \cite{Yo03} and in 1904  by Lebesque \cite{Leb}; there is a simple construction in Oxtoby \cite[7.2]{Ox80}. The construction was generalized to  metric spaces  by H. Hahn in 1932 \cite{Ha32} (we learned of the paper  of  Hahn in Bolstein \cite{Bo73}). Then  Bolstein \cite{Bo73} proved this fact for a large class of topological spaces that includes, for example, locally compact Hausdorff spaces.  More recently, Kim \cite{K99} constructed  a function that is continuous on any countable intersection of open sets in an arbitrary metric space (without isolated points), although does not seem aware of the earlier results of Young-Lebesgue-Hahn, and Bolstein. Our example in Section~4 is somewhat simpler, other constructions are in \cite{GP93, GP96}.

\section {The Theorems.}

We assume our metric spaces are nonempty. Let $B(p,r)$ and $ B[p,r]$ denote open and closed balls centered at $p$ and of radius $r$, respectively. We recall that a function $f:X\to \mathbb R$ is said to be continuous on a set $A\subset X$ if $f$ is continuous at each point of $A$. This is different from the restriction of $f$ to the set $A$ being continuous (as is illustrated in the last paragraph of this section.

\begin{thm}\label{T1}
Let $X$ be a complete metric space without isolated points. 
If a real-valued function on $X$ is continuous on   a  dense set, then it is continuous on   an uncountable dense set.
\end{thm}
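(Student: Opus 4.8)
The plan is to realise $C(f)$, the set of points where $f$ is continuous, as a countable intersection of dense open sets, and then to run a Cantor-style nested–ball construction: given \emph{any} sequence of points of $X$, I will build a shrinking chain of closed balls whose common point lies in $C(f)$ yet avoids every term of the sequence, so $C(f)$ cannot be countable. Density of $C(f)$ costs nothing, since the countable dense set on which $f$ is assumed continuous is contained in $C(f)$; the work is entirely in the uncountability, and the only completeness fact used will be that Cauchy sequences converge.

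First, for each integer $n\ge 1$ I would put
\[
 U_n=\{\, x\in X : |f(y)-f(z)|<\tfrac1n \text{ for some } \delta>0 \text{ and all } y,z\in B(x,\delta)\,\}.
\]
Three elementary observations, each a line or two with the triangle inequality, do the set-up: $f$ is continuous at $x$ if and only if $x\in\bigcap_{n\ge1}U_n$; each $U_n$ is open, because if $\delta$ witnesses $x\in U_n$ then $\delta/2$ witnesses membership for every point of $B(x,\delta/2)$; and each $U_n$ contains every continuity point of $f$, hence contains the given countable dense set and is therefore dense. So $C(f)=\bigcap_{n\ge1}U_n$ with every $U_n$ dense and open.

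Now fix an arbitrary sequence $(c_k)_{k\ge1}$ in $X$. I would build closed balls $\overline B_1\supseteq\overline B_2\supseteq\cdots$ with $\overline B_n\subseteq U_n$, of radius less than $1/n$, with $\overline B_{n+1}$ contained in the interior of $\overline B_n$, and with $c_n\notin\overline B_{n+1}$. The base case just picks a small closed ball inside the nonempty open set $U_1$. For the step, the interior of $\overline B_n$ is a nonempty open set, so it meets the dense open set $U_{n+1}$ in a nonempty open set $V$; since $X$ has no isolated points, $V$ is infinite, so it contains a point $p\ne c_n$; taking $\overline B_{n+1}=\overline B(p,r)$ with $r$ positive but smaller than $1/(n+1)$, smaller than $d(p,c_n)$, and small enough that the closed ball lies in $V$, meets every requirement. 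Choosing $r$ to satisfy all of these constraints simultaneously is the one fiddly point, and it is where both hypotheses enter — completeness, to make the $1/n$ shrinking meaningful, and the absence of isolated points, to keep room to dodge $c_n$; I expect this to be the main obstacle, though it is entirely routine.

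Finally, the centers of the $\overline B_n$ form a Cauchy sequence, because the balls are nested and their radii tend to $0$; by completeness they converge to a point $x$, which lies in each $\overline B_n$ as these balls are closed. Hence $x\in\bigcap_{n\ge1}U_n=C(f)$, while $x\in\overline B_{n+1}$ forces $x\ne c_n$ for every $n$. Thus no sequence enumerates $C(f)$, so $C(f)$ is uncountable; together with the density already noted, $f$ is continuous on the uncountable dense set $C(f)$, which is the assertion. (The same scheme with a binary branching at each level instead yields an explicit injection of $\{0,1\}^{\mathbb N}$ into $C(f)$, an alternative way to finish.)
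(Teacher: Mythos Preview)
Your proof is correct, and its engine is the same as the paper's: nested closed balls with radii tending to zero, each chosen to witness small oscillation of $f$ and to dodge one term of a prescribed enumeration, with completeness used only to make the Cauchy sequence of centres converge. The organisation differs in two ways worth noting. First, you explicitly realise $C(f)$ as the $G_\delta$ set $\bigcap_n U_n$ and then prove, in effect, the relevant fragment of Baire's theorem (that a dense $G_\delta$ in a complete space without isolated points is uncountable); the paper deliberately sidesteps this intermediate abstraction, instead choosing each ball's centre to be a point of the given countable dense set $A$ and invoking continuity there directly to control oscillation. Second, your uncountability step is a single diagonalisation against an arbitrary enumeration of $C(f)$, whereas the paper first manufactures one continuity point outside $A$ and then reruns the construction with $A' = A \cup B$ to reach a contradiction if $B = C(f)\setminus A$ were countable. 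Your route is more modular and isolates a reusable lemma about dense $G_\delta$ sets; the paper's is closer to Volterra's original argument and avoids the $G_\delta$ characterisation of $C(f)$, which is exactly the ``standard'' path the authors announce in the introduction that they wish to bypass.
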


\begin{proof} Let $f:X\to\mathbb R$ be a function  and let $A\subset X$ be a countable dense set such that $f$ is 
continuous at every point in $A$. We will first show that there is a point $z\in X\setminus A$ such that $f$ is continuous at $z$.
Write $A=\{q_n:n\in\mathbb N\}$. We define, by induction, a nested sequence 
 of closed balls  $(B[p_n, r_n])_{n\in\mathbb N}$ such that for each $n\in\mathbb N$, 
 \begin{align*}
 r_n&<\frac{1}{2^n},
 \ q_n\notin B[p_n, r_n],\text{ and }\\
|f(x)-f(y)| &<  \frac{1}{2^{n-1}}\ \text{ for all }x,y\in B[p_n, r_n] .
\end{align*}

We start by   setting $p_1=q_1$. By continuity at $p_1$, 
there exists an open ball $B(p_1,\delta_1)$ such that 
$|f(x)-f(p_1)|<\frac{1}{2}$ for all $x$ in $B(p_1,\delta_1)$.  Choose $C_1$ 
a closed ball with center in $X$,  of radius $r_1$ with  $0<r_1<1/2$,  contained in $B(p_1,\delta_1)$  and missing $p_1$
(which we can do as $p_1$ is not an isolated point). 

For the inductive step suppose  that we are given an open ball $B(p_{n},\delta_{n})$, where $p_{n}\in A$, 
 such that \[|f(x)-f(p_{n})|<\frac{1}{2^{n}}\text{ for all }x \in B(p_{n},\delta_{n}),\]  
and a closed ball $C_{n}$,  of radius $r_{n}$, inside $B(p_{n},\delta_{n})$ and not containing  
$q_{n}$. Choose now  an element of $A$, denoted by $p_{n+1}$, in $B(p_{n},\delta_{n})$.
By continuity at $p_{n+1}$, there exists an open ball $B(p_{n+1},\delta_{n+1})$, 
that we can choose inside $B(p_{n},\delta_{n})$, such that 
$|f(x)-f(p_{n+1})|<\frac{1}{2^{n+1}}$ { for all }$x \in B(p_{n+1},\delta_{n+1}).$ Choose a closed 
ball $C_{n+1}$ inside $B(p_{n+1},\delta_{n+1})$, of radius $r_{n+1}<\frac{r_{n}}{2}$, and that misses the element $q_{n+1}$. 
For the final condition, if $x,y\in C_n$, then 
\[|f(x)-f(y)|\leq |f(x)-f(p_n)+|f(p_n)-f(y)| <\frac{1}{2^n}+\frac{1}{2^n}=\frac{1}{2^{n-1}}.\]

We have constructed a sequence of nested closed balls $C_n$ with $q_n\notin C_n$. The centers of these balls form a Cauchy sequence, as their radii $r_n$ satisfy $s_n<1/2^n$ for $n\geq 1$. Since the space is complete, the sequence of centers has a limit $z$, which must be in the intersection of the balls. 
Also, $z$ is not in  $A$ as $z\neq q_n$ for all $n\in\mathbb N$. 

We show that  $f$ is continuous at $z$.
Let $\varepsilon>0$ and choose $n\in\mathbb N$ so that
$1/2^{n-1}<\varepsilon$. Choose $\delta>0$ such that the ball $B(z,\delta)$ is in $C_n$.
If $x\in B(z,\delta)$, then 
$|f(x)-f(z)|< \frac{1}{2^{n-1}}<\varepsilon.$

Now let $B$ be the subset of $X\setminus A$ consisting of all points $z$ such that $f$ is continuous at $z$.
If $B$ is countable let $A^\prime=A\cup B$. Then $A^\prime$ is countable and dense, so by the previous argument,
there exists a point $z$ in $X\setminus A^\prime$ such that $f$ is continuous at $z$. But this contradicts 
the definition of $B$, therefore $B$ must be uncountable.
\end{proof}

If  $X$ consists only of  isolated points and $A$ is dense, then $A=X$. The theorem  also implies that a 
(nonempty) complete metric space with no isolated points must be 
uncountable (consider a   function constant on $X\setminus \{p\}$ and with a different value at $p$).  It follows that positive-radius closed balls in $X$, as they are complete, must be uncountable. Therefore, nonempty open balls in $X$ are uncountable.

We now show that a small modification of the proof in Theorem~\ref{T1} obtains Volterra's theorem \cite{Vo} as strengthened by Gauld \cite{G93}.

\begin{thm}\label{T2} Let $X$ be a nonempty complete metric space without isolated points and let $f$ and $g$ be two real-valued functions defined on $X$. If $C(f)$ and $C(g)$ are dense, then $C(f)\cap C(g)$ is an uncountable dense set.
\end{thm}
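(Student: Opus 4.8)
The plan is to mimic the inductive construction in the proof of Theorem~\ref{T1}, but at each stage shrink the ball to control \emph{both} functions simultaneously. Since $C(f)$ and $C(g)$ are both dense, so is their intersection's ``approximating'' role: at each step I will choose the new center $p_{n+1}$ in $C(f)$ when $n$ is odd and in $C(g)$ when $n$ is even (alternating), or more cleanly, I will pick $p_{n+1}$ from whichever of $C(f)$, $C(g)$ is convenient and then use continuity of that function there. The key is that we do \emph{not} need a single point lying in both $C(f)$ and $C(g)$ during the construction; we only need, for the limit point $z$, that the oscillation of $f$ on $C_n$ tends to $0$ \emph{and} the oscillation of $g$ on $C_n$ tends to $0$.

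Concretely, first I would enumerate a countable dense set $A=\{q_n:n\in\N\}$ of $X$ (which exists since $X$, being complete and separable-is-not-assumed --- wait, separability is not assumed, so instead I enumerate nothing and instead just aim for a single continuity point, then bootstrap). Actually the cleaner route: I will construct a nested sequence of closed balls $C_n$ with radii $s_n<1/2^n$ such that for every $x,y\in C_{2k}$ we have $|f(x)-f(y)|<1/2^{k-1}$ and for every $x,y\in C_{2k+1}$ we have $|g(x)-g(y)|<1/2^{k-1}$, and additionally $q_n\notin C_n$ so that the limit point $z$ avoids a prescribed countable set. At an even step $2k$: the previous ball $C_{2k-1}$ has nonempty interior, hence (as $C(f)$ is dense) contains a point $p_{2k}\in C(f)$; by continuity of $f$ at $p_{2k}$ pick a small open ball around it inside $C_{2k-1}$ on which $f$ oscillates by less than $1/2^k$, then take a closed sub-ball $C_{2k}$ inside it, of radius less than $s_{2k-1}/2$, missing $q_{2k}$ (possible as there are no isolated points). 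Odd steps are symmetric with $g$ in place of $f$.

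Then completeness gives a limit $z$ of the centers lying in $\bigcap_n C_n$, with $z\neq q_n$ for all $n$. Since the oscillation of $f$ on $C_{2k}$ is less than $1/2^{k-1}\to 0$ and these balls are nested and shrinking, $f$ is continuous at $z$ by the same $\varepsilon$-$\delta$ argument as in Theorem~\ref{T1}; likewise $g$ is continuous at $z$ using the odd-indexed balls. Hence $z\in C(f)\cap C(g)$ and $z$ avoids $\{q_n\}$. This shows $C(f)\cap C(g)$ meets the complement of any prescribed countable set, and in particular is nonempty. For density, repeat the construction inside an arbitrary nonempty open set $U$ (which is itself a nonempty complete metric space without isolated points, being open in such a space --- using that open balls are uncountable and more to the point that a closed ball inside $U$ is complete); this yields a point of $C(f)\cap C(g)$ in $U$. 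Finally, for uncountability, run exactly the bootstrapping argument from the end of Theorem~\ref{T1}: if $C(f)\cap C(g)$ were countable, enumerate it as $\{q_n\}$ and the construction above produces a point of $C(f)\cap C(g)$ outside this enumeration, a contradiction.

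The main obstacle is bookkeeping the two oscillation conditions at interleaved steps while simultaneously threading the avoidance condition $q_n\notin C_n$ and the geometric decay $s_{n+1}<s_n/2$; none of this is deep, but one must be careful that at step $n$ the ball $C_{n-1}$ genuinely has nonempty interior so that the dense set $C(f)$ (or $C(g)$) meets it --- this is automatic since $C_{n-1}$ is a closed ball of positive radius and $X$ has no isolated points. A second small point: after finding one continuity point $z\notin\{q_n\}$, the passage to density and then to uncountability is formally identical to the corresponding passages in Theorem~\ref{T1}, so I would simply remark that the argument there applies verbatim rather than rewriting it.
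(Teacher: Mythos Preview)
Your argument is correct and follows essentially the same approach as the paper's. The only cosmetic difference is that the paper, at each stage $n$, first picks a point of $C(f)$ and shrinks, then picks a point of $C(g)$ inside and shrinks again before defining $C_n$ (so both oscillations are controlled on every $C_n$), whereas you alternate, handling $f$ at even steps and $g$ at odd steps; since the balls are nested this comes to the same thing, and the density and uncountability bootstraps are identical.
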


\begin{proof}    Let $G$ be an open set in $X$.
Since $C(f)$ is dense we can choose $q_{1}\in C(f)$ and $\delta_1>0$ so that    $B(q_1,\delta_1)\subset G$  and 
$|f(x)-f(q_{1})|< \frac12$ for all $x\in B(q_1,\delta_1)$.  Since $C(g)$ is dense we can choose $r_{1}\in C(g)\cap B(q_1,\delta_1)$ and $\delta_1^\prime>0$ so that $B(r_{1},\delta_1^\prime)\subset B(q_1,\delta_1)$ and 
$|g(x)-g(r_{1})|<\frac{1}{2}$ for all $x$ in $B(r_{1},\delta_1^\prime)$.  Now choose $C_1$ 
a closed ball, of radius $1/2>s_1>0$,  contained in $B(r_{1},\delta_1^\prime)$.

The inductive step is similar. We can assume that by induction we have constructed a sequence $(C_n)$ of closed balls, in $G$,  and $p_n\in C(f), q_n\in C(g)$   such that  
\[|f(x)-f(p_{n})| <\frac{1}{2^{n}}\text{ and } |g(x)-g(q_{n})| <\frac{1}{2^{n}}\text{ for all }x\in C_n.\]
There is a point $z$ in the intersection $\bigcap_n C_n$ 
so that the continuity condition holds for both $f$ and $g$ at $z$.  The argument for this is similar to the one in
Theorem~\ref{T1}.  So $z\in C(f)\cap C(g)$ and  it is clear that $z$ is  also  in $G$. 

Now if $C(f)\cap C(g)$  were countable, say equal to the set $\{t_n:n\in\mathbb N\}$, we could repeat the construction choosing
$C_n$ to avoid $t_1,\ldots,t_n$. Then we would obtain a point $z$ different from the $t_n$.
\end{proof}

If  $X$ consists only of  isolated points and $A$ is dense, then $A=X$. The theorem  also implies that a 
nonempty complete metric space with no isolated points must be 
uncountable (consider a   function constant on $X\setminus \{p\}$ and with a different value at $p$).  It follows that positive-radius closed balls in $X$, as they are complete, must be uncountable. Therefore, nonempty open balls in $X$ are uncountable.

In the context of Theorem~\ref{T1} we mention an interesting result that contrasts the difference between continuity on
a subset and continuity of the function restricted to a subset. This   is a consequence of theorems of
Blumberg and Sierpinsky--Zygmund (recent new proofs of these two theorems can be found in \cite{CMS}).
Every function $h:\mathbb R\to\mathbb R$ has a dense set $D\subset\mathbb R$ so that the restriction of $h$ to
$D$ is continuous (Blumberg), but there exists one such function $h$ such that   for every  subset $S\subset\mathbb R$ that has the cardinality of $\mathbb R$, the restriction of $h$ to $S$ is discontinuous (Sierpinsky--Zygmund).

\section  {The Examples.}\label{examples}  Let $X$ be a    metric space without isolated points, and $A$ a countable  subset of $X$ such that $A^c$ is dense.  For example, if    $X$ is a   complete metric space without isolated points and $A$ is any countable subset, its complement is dense since nonempty open balls in $X$ are uncountable.  We show that  there is  a function on $X$ that
is discontinuous precisely  on $A$. (As remarked earlier, this is a special case of a theorem of Young, Lebesgue, and Hahn.) In fact,  write $A=\{q_n:n\in\mathbb N\}$. Define a function $g$ on $A$ by setting $g(q_n)=1+1/2^n$, and 
when $x$ is not in $A$ let  $g(x)=1$. To show that $g$ is continuous outside $A$ let $x\notin A$ and $\varepsilon>0$.  Choose $n$ so that $1/2^n<\varepsilon$. 
Then choose $\delta>0$ so that the ball $B(x,\delta)$ avoids the finitely many points $q_1,q_2,\ldots,q_{n-1}$. Now let  $y$ be a point in $ B(x,\delta)$. If $y\notin A$, then $g(y)=1=g(x)$, so $|g(x)-g(y)|<\varepsilon$. If $y\in A$, then $g(y)=1+1/2^m$ for some  $m\geq n$. So 
 $|g(x)-g(y)|= 1/2^m<\varepsilon$. Finally, to see that $g$ is not continuous on $A$ we use that 
 the complement of $A$ is dense in $X$.  For each $q_n$ let $0<\varepsilon<1/2^n$. We can choose $y$ in $A^c$ that is arbitrarily close to $q_n$ and
 such that $|g(q_n)-g(y)|=1/2^n>\varepsilon$. Then $g$ is discontinuous at $q_n$.  
 
 The example also shows that one cannot hope to improve the theorem, as there are functions that are continuous on an uncountable set but not the whole space. 
 
 If we  restrict the domain of the function $g$ to the case when  $X=\mathbb R$,  then one can verify that $g$ is not differentiable at points in $A^c$  (and of course also at points in $A$). This is also holds for Thomae's function; for modified Thomae's functions that are differentiable on a countable set of irrationals see \cite{BRS}.
 
 We end by  noting  that the example above can also be modified to prove an equivalent form of the Young-Lebesgue-Hahn  theorem, which also has a proof in Gauld--Piotrowsk \cite{GP93}, Kim \cite{K99}: \textit{Let $X$ be a  metric space without isolated points. If $A$ is a countable union of closed sets, we show that  there is a function $g(x)$ which is discontinuous exactly on $A$.} 
 Before construction the function $g$ we observe that by taking complements we also obtain that given any set $G$ that is a countable intersection of open sets we have that $G=C(g)$. So countable intersections of open sets are sets of continuity points. This prompts one to ask if Theorem~\ref{T1} already proves Baire's theorem, in other words, if for metric spaces the Volterra property implies the Baire property. This has been shown to be the case in \cite{GL00}.
 
 We will need the fact that any metric space  $X$ without isolated points has a dense subset $D$ whose complement is also dense in $X$; this is clearly true for $\mathbb {R}^n$ and a proof of the general case can be found in   Kim \cite{K99}. We outline Kim's proof for completeness. For each $n\in\mathbb N$ let $S_{1/n}$ denote a subset of $X$ such that any two points in $S_{1/n}$ are at a distance greater than or equal to $1/n$ and that is maximal with respect to this property. Zorn's lemma implies that these sets exist. Now start with $S_1$. Then $X\setminus S_1$ must be nonempty and is a metric space without isolated points, so here we can consider $S_{1/2}$. In this way we obtain a pairwise disjoint sequence 
 $S_1\cup S_{1/2}\cup S_{1/3}\cdots$. Then the sets $D_1=\bigcup_n S_{1/2n}$ and $D_2=\bigcup_n S_{1/2(n-1)}$ are disjoint and dense.
 
 Write now  $A= \bigcup_{n\in\mathbb N} Q_n$, where each $Q_n$ is a closed subset of $X$, and we may assume they are increasing: $Q_n\subset Q_{n+1}$.  We modify  the function $g$ above  in the following way. First we define $n(x)$ to be the smallest positive integer such that $x\in Q_{n(x)}$. Define 
 $$ g(x) =
\begin{cases}
1, &\text{ if } x\notin A;\\
1+1/2^n, &\text{ if } x\in A, n=n(x),  \text{ and } x\in D;\\
1-1/2^n, &\text{ if } x\in A, n=n(x), \text{ and } x\notin D.\\
\end{cases}$$
 The proof that the modified function g is discontinuous only on A is similar to
the proof given in the example above.
 
 
 To show that $g$ is continuous outside $A$ let $x\notin A$ and $\varepsilon>0$. Choose $n$ so that $1/2^n<\varepsilon$. Then choose $\delta>0$ so that the ball $B(x,\delta)$ avoids the finitely many closed sets $Q_1, Q_2, \ldots, Q_{n-1}$. We can choose such $\delta$ because the finite union of closed sets $Q_1 \cup Q_2, \cup \ldots \cup Q_{n-1}$ is closed and thus not dense. Now let $y$ be a point in $B(x,\delta)$. If $y\notin A$, then $g(y)=1=g(x)$, so $|g(x)-g(y)|<\varepsilon$. If $y\in A$, then $g(y)=1\pm 1/2^m$ for some $m\geq n$. So $|g(x)-g(y)|= 1/2^m<\varepsilon$.
 
 Finally, to see that $g$ is not continuous on $A$, we note that both $D$ and $D^c$ are dense in $X$, and $X = D \sqcup D^c$. For any $x\in A$ we define $n=n(x)$  and let $0<\varepsilon<1/2^n$. Choose any $\delta$. Since $A = (D\cap A) \sqcup (D^c\cap A)$, either $x\in D$ or $x\in D^c$. If $x\in D$, since $D^c$ is dense in $X$, we can choose any $y\in B(x,\delta)$ such that $y\in D^c$, then $|g(x)-g(y)|\geq 1/2^n>\varepsilon$. Similarly, if $x\in D^c$, we choose any $y\in B(x,\delta)$ such that $y\in D$, then $|g(x)-g(y)|\geq 1/2^n>\varepsilon$. Thus $g$ is discontinuous at any $x \in A$.

 \bigskip

\noindent {\bf Acknowledgments.}
The  authors would like to thank  Frank Morgan for comments and suggestions. This started as a project by the second-named
author in a real analysis class taught by the first-named author at Williams. C.S. acknowledges support from NSF REU grants.  


\subsection*{Cesar E. Silva}
  Department of Mathematics and Statistics\\
     Williams College \\ Williamstown, MA 01267, USA. 
     csilva@williams.edu

\subsection*{Yuxin Wu}

     Management Science \& Engineering\\
Huang Engineering Center\\
Stanford University\\
475 Via Ortega\\
Stanford, CA 94305, USA.
 {yuxinwu@stanford.edu}


\end{document}